\newtheorem{theorem}{Theorem}[section]
\newtheorem{lemma}[theorem]{Lemma}
\theoremstyle{definition}
\newtheorem{example}[theorem]{Example}
\theoremstyle{remark}
\newtheorem{remark}[theorem]{Remark}
\numberwithin{equation}{section}
\begin{document}

\title{Thin right-angled Coxeter groups in some uniform arithmetic lattices}

\author{Sami Douba}
\address{McGill University, Department of Mathematics and Statistics}
\email{sami.douba@mail.mcgill.ca}
\thanks{The author was supported by a public grant as part of the Investissement d'avenir project, FMJH, and by the National Science Centre, Poland UMO-2018/30/M/ST1/00668.}


\begin{abstract}
Using a variant of an unpublished argument due to Agol, we show that an irreducible right-angled Coxeter group on ${n \geq 3}$ vertices embeds as a thin subgroup of a uniform arithmetic lattice in an indefinite orthogonal group~$\mathrm{O}(p, q)$ for some $p, q \geq 1$ satisfying $p+q=n$. 
\end{abstract}

\maketitle

Let ${\bf G}$ be a semisimple algebraic $\mathbb{R}$-group and $\Gamma$ a lattice in $G := {\bf G}(\mathbb{R})$. A subgroup~$\Delta \subset \Gamma$ is said to be {\it thin} if $\Delta$ is Zariski-dense in $G$ but of infinite index in~$\Gamma$. It follows from the Borel density theorem \cite[Corollary~4.3]{MR123639} and a classical result of Tits \cite[Theorem~3]{MR286898} that if ${\bf G}$ as above is nontrivial, connected, and without compact factors, then any lattice in $G$ contains a thin nonabelian free subgroup. A famous construction of Kahn–Markovic \cite{MR2912704} produces thin surface subgroups of all uniform lattices in $\mathrm{SO}(3,1)$ (see \cite{MR3361773}, \cite{MR3665171}, \cite{MR3921320}, \cite{kahn2018surface} for some other manifestations of surface groups as thin groups). In \cite{MR4127090}, Ballas–Long show that many arithmetic lattices in $\mathrm{SO}(n,1)$ virtually embed as thin subgroups of lattices in $\mathrm{SL}_{n+1}(\mathbb{R})$, and raise the question as to which groups arise as thin groups. In this note, we observe the following.

\begin{theorem}\label{main} An irreducible right-angled Coxeter group on $n \geq 3$ vertices embeds as a thin subgroup of a uniform arithmetic lattice in $\mathrm{O}(p, q)$ for some $p, q \geq 1$ satisfying $p+q=n$. 
\end{theorem}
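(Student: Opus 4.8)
\emph{Strategy and the form.} The plan is to realize $W:=W_\Gamma$ directly as a discrete reflection group preserving a quadratic form over a suitable totally real field, chosen so that the associated integral orthogonal group is cocompact and automatically contains the reflection group; presumably this is essentially the variant of Agol's argument alluded to above. Write $\Gamma$ for the defining graph, with vertex set $\{1,\dots,n\}$; irreducibility means the complement $\Gamma^{c}$ is connected, so $\Gamma$ has a non-edge and $W$ is infinite (two non-adjacent generators generate an infinite dihedral group). Fix a real quadratic field $k$ with a unit $u$ whose two real embeddings $\sigma_{0},\sigma_{1}$ satisfy $\sigma_{0}(u)>1$ (so $|\sigma_{1}(u)|<1$) — for instance $k=\mathbb{Q}(\sqrt{3})$, $u=2+\sqrt{3}$ — fix a large integer $N$, and let $A=(a_{ij})$ be the symmetric matrix over $k$ with $a_{ii}=1$, with $a_{ij}=0$ when $\{i,j\}$ is an edge of $\Gamma$, and with $a_{ij}=-u^{N}$ when $\{i,j\}$ is a non-edge. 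For $N$ large: $\det A\neq 0$; the conjugate form $A^{\sigma_{1}}$ is diagonally dominant, hence positive definite; and $A^{\sigma_{0}}$ is indefinite of signature $(p,q)$ with $p\geq 1$ (since $a_{11}=1>0$) and $q\geq 1$ (any non-edge $\{i,j\}$ spans a plane on which $A^{\sigma_{0}}$ has determinant $1-\sigma_{0}(u)^{2N}<0$). Let $\rho\colon W\to\mathrm{GL}_{n}(k)$ be the reflection representation sending the generator at $i$ to the $A$-reflection in the $i$-th basis vector; this is (a faithful copy of) the Tits representation of $W$ attached to $A$, and since $2a_{ij}\in\mathcal{O}_{k}$ it takes values in $\mathbf{O}(A)(\mathcal{O}_{k})$, where $\mathbf{O}(A)$ is the orthogonal $k$-group of $A$.

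\emph{The ambient cocompact arithmetic lattice.} Since $A^{\sigma_{1}}$ is definite, $A$ is anisotropic over $k$, so $\mathbf{O}(A)$ is $k$-anisotropic. By Borel–Harish-Chandra applied to $\mathrm{Res}_{k/\mathbb{Q}}\mathbf{O}(A)$, the group $\Lambda_{0}:=\mathbf{O}(A)(\mathcal{O}_{k})$ is an arithmetic lattice in $\mathbf{O}(A)(k_{\sigma_{0}})\times\mathbf{O}(A)(k_{\sigma_{1}})\cong\mathrm{O}(p,q)\times\mathrm{O}(n)$, and by Godement's criterion it is cocompact. Projecting onto the first factor — the kernel is the compact group $\mathrm{O}(n)$, which meets $\Lambda_{0}$ in a finite subgroup, so the image is again a cocompact lattice — produces a uniform arithmetic lattice $\Lambda\subset\mathrm{O}(p,q)$ with $p+q=n$. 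Composing $\rho$ with this projection just applies $\sigma_{0}$ to matrix entries, yielding the Tits representation of $W$ with respect to $A^{\sigma_{0}}$, which is faithful; thus $W$ embeds as a subgroup of $\Lambda$.

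\emph{Thinness.} Zariski-density of $\rho(W)$ in $\mathrm{O}(p,q)=\mathbf{O}(A^{\sigma_{0}})$ should follow from the theorem of Benoist–de la Harpe on Zariski closures of geometric representations of Coxeter groups: $W$ is irreducible and infinite and $A^{\sigma_{0}}$ is nondegenerate, so the Zariski closure of its image is the full orthogonal group (it is not contained in $\mathbf{SO}(A^{\sigma_{0}})$, as the generators have determinant $-1$). For infinite index, suppose for contradiction that $\rho(W)$ has finite index in $\Lambda$. Then $\rho(W)\cong W$ is itself a uniform lattice in $\mathrm{O}(p,q)$, so $W$ is a virtual Poincaré duality group of dimension $\dim\bigl(\mathrm{O}(p,q)/(\mathrm{O}(p)\times\mathrm{O}(q))\bigr)=pq$; by Davis's characterization of which Coxeter groups are virtual Poincaré duality groups, the nerve of $W$ — namely the flag complex $L$ of $\Gamma$, which has exactly $n$ vertices — must be a generalized homology sphere of dimension $pq-1$. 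But a flag generalized homology sphere of dimension $m$ has at least $2m+2$ vertices (induction on $m$: the base case is $S^{0}$; for the step, the link of any vertex is a flag generalized homology sphere of dimension $m-1$, and no vertex can be adjacent to all others, since a flag complex with such a vertex is a cone and hence contractible). Hence $n\geq 2pq$; but $pq=k(n-k)\geq n-1$ for every $1\leq k\leq n-1$, so $n\geq 2(n-1)$, which is absurd since $n\geq 3$. Therefore $\rho(W)$ has infinite index, and $W$ embeds as a thin subgroup of $\Lambda$.

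\emph{Where the difficulty lies.} The one step requiring genuine care is the construction in the first paragraph: one must simultaneously arrange that the reflection group is defined over $\mathcal{O}_{k}$ (so that it lies in an arithmetic group), that the form is definite at the auxiliary real place (so that this arithmetic group is \emph{cocompact}), and that the form is nondegenerate and indefinite at the distinguished place (so that $\mathrm{O}(p,q)$ is a noncompact group with $p,q\geq 1$ into which $W$ maps faithfully and Zariski-densely). The ``$-u^{N}$'' recipe is engineered to deliver all three at once. After that, the argument is an assembly of standard arithmetic-group theory with two external inputs on Coxeter groups — Benoist–de la Harpe for the Zariski density and Davis for the Poincaré-duality obstruction to finite index — so I would expect the proof of the theorem to be short.
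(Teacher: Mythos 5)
Your construction of the thin subgroup and of its ambient lattice coincides with the paper's: the paper forms the Gram matrix $M_d$ with off-diagonal entries $-d$ on the infinite bonds, specializes $d$ to a large unit $\alpha$ of a real quadratic field whose Galois conjugate is small (your $u^{N}$), so that the conjugate form is positive definite and $\mathrm{O}(M_\alpha;\mathcal{O}_K)$ is a uniform arithmetic lattice containing the Tits--Vinberg image of $W$, and it obtains Zariski density by re-running the Benoist--de la Harpe argument. (On that last point your hedge is warranted: their theorem as stated concerns the cosine matrix, with $-1$ rather than $-u^{N}$ on infinite bonds, so one must redo their proof for the deformed matrix; this is exactly the paper's Lemma~\ref{Zariski}, and the adaptation is routine.) The genuine divergence is in the proof of infinite index. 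The paper deforms the parameter: it conjugates the nearby representations $\sigma_d$, $d$ near $\alpha$, back into $\mathrm{O}(M_\alpha;\mathbb{R})$ and invokes Weil local rigidity of the putative uniform lattice $\sigma_\alpha(W^{+})$ to force $\mathrm{tr}\,\sigma_d(\gamma_i\gamma_j)=4d^{2}-4+n$ to be locally constant in $d$, a contradiction --- with a separate ad hoc argument for $n=3$, where $\mathrm{SO}(2,1)$ is excluded from Weil rigidity. You instead observe that a uniform lattice in $\mathrm{O}(p,q)$ is a virtual $\mathrm{PD}^{pq}$ group, so by Davis's characterization (together with irreducibility, which is what rules out the simplex join factor in his statement and lets you say the nerve itself is the sphere) the nerve --- a flag complex on $n$ vertices --- would be a generalized homology $(pq-1)$-sphere; your link/cone induction giving the bound of $2m+2$ vertices for a flag generalized homology $m$-sphere is correct, and $n\geq 2pq\geq 2(n-1)$ is absurd for $n\geq 3$. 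Your route is uniform in $n$ (no special case at $n=3$) and purely combinatorial--topological, at the cost of importing Davis's theorem and Selberg's lemma; the paper's route stays inside the deformation framework it has already built and needs only local rigidity plus an explicit $2\times 2$ trace computation. Both arguments are complete.
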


To that end, let $\Sigma_1$ be a connected simplicial graph on $n \geq 3$ vertices; we think of $\Sigma_1$ as a Coxeter diagram in the sense of \cite[Section~2.1]{felikson2014essential} all of whose edges are bold. Fix an order $v_1, \ldots, v_n$ on the vertices of $\Sigma_1$, and let $W$ be the group given by the presentation with generators $\gamma_1, \ldots, \gamma_n$ subject to the relations $\gamma_i^2 = 1$ for $i=1, \ldots, n$, and $[\gamma_i, \gamma_j]=1$ for each distinct $i,j \in \{1, \ldots n\}$ such that $v_i$ and $v_j$ are not adjacent in $\Sigma_1$. The group $W$ is the {\it (right-angled) Coxeter group} associated to the diagram $\Sigma_1$. Let $W^+$ be the index-$2$ subgroup of $W$ consisting of all elements that can be expressed as a product of an even number of the $\gamma_i$; that $W^+$ indeed constitutes an index-$2$ subgroup of $W$ follows, for instance, from faithfulness of the representation $\sigma_1$ of $W$ to be defined in the sequel.

For $d \in \mathbb{R}$, let $M_d = (m_{ij}) \in \mathrm{M}_n(\mathbb{Z}[d])$ be the symmetric matrix given by 
\[
m_{ij} = \begin{cases} 1 & \text{if } i= j \\ -d & \text{if }i \neq j \text{ and }v_i,v_j\text{ are joined by an edge in }\Sigma_1 \\ 0 & \text{otherwise.} \end{cases}
\]
Let $\epsilon > 0$ be such that $M_d$ is positive-definite for $d \in [-\epsilon, \epsilon]$, and let $D \geq 1$ be such that $M_d$ is nondegenerate and its signature constant as $d$ varies within $[D, \infty)$. 
Note that $M_1$ is the Gram matrix of the diagram $\Sigma_1$ (and the given order on the vertices of $\Sigma_1$). In particular, we have that $\epsilon < 1$. For $d > 1$, the matrix $M_d$ is the Gram matrix of the diagram $\Sigma_d$ obtained from $\Sigma_1$ by replacing each edge with a dotted edge labeled by $d$. (Here, we are again using the conventions employed by \cite[Section~2.1]{felikson2014essential}.)

For $d \geq 1$, let $\sigma_d : W \rightarrow \mathrm{GL}_n(\mathbb{R})$ be the Tits–Vinberg representation associated to the Coxeter diagram $\Sigma_d$ and the given order on its vertices; this is the representation given by
\[
\sigma_d(\gamma_i)(v) = v - 2 (v^T M_d e_i) e_i
\]
for $i = 1, \ldots, n$ and $v \in \mathbb{R}^n$, where $(e_1, \ldots, e_n)$ is the standard basis for $\mathbb{R}^n$. It follows from Vinberg's theory of reflection groups that the representations $\sigma_d$, $d \geq 1$, are faithful \cite[Theorem~5]{vinberg1971discrete} (see Lecture 1 in \cite{benoist2004five} for an exposition). This family of representations was studied in \cite{MR4139042}. 

If $M \in \mathrm{M}_n(\mathbb{R})$ is a symmetric matrix and $A$ is a subdomain of $\mathbb{C}$, we write
\begin{alignat*}{4}
\mathrm{O}(M; A) &= \{g \in \mathrm{GL}_n(A) \> : \> g^T M g = M \}, \\
\mathrm{SO}(M; A) &= \{g \in \mathrm{SL}_n(A) \> : \> g^T M g = M \}.
\end{alignat*}
Note that we have $W_d := \sigma_d(W) \subset \mathrm{O}(M_d; \mathbb{R})$ by design.


\begin{lemma}\label{Zariski}
The group $W_d$ is Zariski-dense in $\mathrm{O}(M_d; \mathbb{R})$ for $d \geq D$.
\end{lemma}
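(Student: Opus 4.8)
The plan is to identify the Zariski closure $H := \overline{W_d}$ of $W_d$ in $\mathrm{GL}_n(\mathbb{R})$ with $\mathrm{O}(M_d;\mathbb{R})$ for $d \geq D$. Since $W_d$ contains the reflection $\sigma_d(\gamma_1)$, of determinant $-1$, and $\mathrm{SO}(M_d)$ is Zariski-connected, it is enough to show that the identity component $H^\circ$ equals $\mathrm{SO}(M_d)$; in particular the real-topological disconnectedness of $\mathrm{SO}(p,q)$ plays no role.

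The first step, where the hypothesis on $\Sigma_1$ enters, is to check that $\sigma_d$ is absolutely irreducible whenever $M_d$ is nondegenerate. Since $e_i^{T} M_d e_i = 1$, the element $\sigma_d(\gamma_i)$ is the $M_d$-orthogonal reflection fixing the hyperplane $e_i^{\perp}$ (orthocomplement with respect to $M_d$) and negating $e_i$. A linear reflection stabilizes a subspace $U$ if and only if its $(-1)$-line lies in $U$ or $U$ lies in its fixed hyperplane; so for a $W_d$-invariant $U \subseteq \mathbb{C}^n$, each index $i$ lies in $S := \{\, i : e_i \in U \,\}$ or in $T := \{\, i : U \subseteq e_i^{\perp} \,\}$. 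These sets are disjoint (as $e_i$ is not $M_d$-isotropic) with union $\{1,\dots,n\}$. If $S = \emptyset$ then $U \subseteq \bigcap_i e_i^{\perp} = 0$ by nondegeneracy of $M_d$; if $T = \emptyset$ then $U = \mathbb{C}^n$. Otherwise connectedness of $\Sigma_1$ gives an edge from some $v_i$, $i \in S$, to some $v_j$, $j \in T$, and then $e_i^{T} M_d e_j = -d \neq 0$ yields $e_i \notin e_j^{\perp}$, contradicting $e_i \in U \subseteq e_j^{\perp}$.

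Granting this, $H$ acts absolutely irreducibly, so its unipotent radical has full fixed space, hence is trivial, and $H^\circ$ is reductive; moreover $H^\circ \neq 1$ since the endpoints of any edge of $\Sigma_1$ generate an infinite dihedral group and so $W_d$ is infinite. When $n = 3$ one finishes at once: under the $\mathrm{O}(M_d)$-equivariant isomorphism $\mathfrak{so}(M_d) \cong \wedge^2 \mathbb{C}^3$, which is $W_d$-irreducible (being a twist of the irreducible module $\mathbb{C}^3$), the Lie algebra of $H$ is a nonzero submodule and hence all of $\mathfrak{so}(M_d)$. For general $n$ this shortcut fails, and obtaining $H^\circ = \mathrm{SO}(M_d)$ is the substantive point; I would deduce it from the theorem of Benoist and de la Harpe on the Zariski closure of a Coxeter group in the orthogonal group of its defining bilinear form, applied to $W$, which is infinite (as $\Sigma_1$ has an edge), irreducible (as $\Sigma_1$ is connected), and non-affine (as $n \geq 3$), via a Tits--Vinberg representation preserving the nondegenerate form $M_d$. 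Absent a direct appeal to that result, one would reprove it by studying the $W_d$-action on $\mathbb{P}(\mathbb{R}^n)$: the elements $\sigma_d(\gamma_i\gamma_j)$ associated to edges are proximal (hyperbolic on a Lorentzian $2$-plane, trivial on its orthocomplement, since $d \geq D > 1$), and one shows, exploiting that $W_d$ is generated by the reflections in the roots $e_i$, that the attracting points of its proximal elements do not lie in a proper $H$-invariant subvariety. This last step---excluding proper irreducibly acting reductive subgroups of $\mathrm{SO}(M_d)$, which can be non-simple (e.g.\ $\mathfrak{so}(2,2)$, when $\Sigma_1$ is a path on four vertices) and of small dimension---is the main obstacle.
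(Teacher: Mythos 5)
Your reduction to showing $H^\circ = \mathrm{SO}(M_d;\mathbb{R})$, your proof that $\sigma_d$ is absolutely irreducible when $M_d$ is nondegenerate, and your treatment of $n=3$ via $\mathfrak{so}(M_d)\cong \Lambda^2\mathbb{C}^3\cong \mathbb{C}^3\otimes\det$ are all correct. But for $n>3$ the proposal does not close. Your primary route is to quote the Benoist--de la Harpe theorem; as published, that theorem concerns the canonical Tits geometric representation, i.e.\ the form $M_1$ (off-diagonal entries $-\cos(\pi/m_{ij})$, equal to $-1$ on the edges here), and its conclusion is density in $\mathrm{O}(M_1;\mathbb{R})$. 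That statement does not transfer to $\mathrm{O}(M_d;\mathbb{R})$ for $d\geq D$: the whole point of the deformation is that the signature of $M_d$ changes with $d$ (see the Example in the paper), so $\mathrm{O}(M_d;\mathbb{R})$ need not even be isomorphic to $\mathrm{O}(M_1;\mathbb{R})$. What one can legitimately do is rerun their \emph{proof} for the deformed form --- which is exactly what the paper does, and which your citation elides. Your fallback route (proximal elements, excluding proper irreducibly-acting reductive subgroups) is the hard part, and you concede as much; as written it is a plan, not a proof. (Also, minor: $D$ may equal $1$, in which case $\sigma_D(\gamma_i\gamma_j)$ is unipotent rather than proximal on the plane $\langle e_i,e_j\rangle$.)

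The paper's argument sidesteps irreducibility, reductivity, and any classification of subgroups. For each pair $i\neq j$, the pointwise stabilizer in $\mathrm{O}(M_d;\mathbb{R})$ of $E_{i,j}=\langle e_i,e_j\rangle^{\perp_{M_d}}$ is one-dimensional with identity component $T_{i,j}=\exp\langle X_{i,j}\rangle$, and nondegeneracy of $M_d$ forces the $X_{i,j}$ to be a \emph{basis} of $\mathfrak{o}(M_d)$ (Lemme~7 of Benoist--de la Harpe). For each edge $v_iv_j$ of $\Sigma_1$, the infinite-order element $\sigma_d(\gamma_i\gamma_j)$ lies in $T_{i,j}$, so $X_{i,j}$ lies in the Lie algebra $\mathfrak{h}$ of the Zariski closure; a bracket computation then propagates membership of $X_{i,j}$ in $\mathfrak{h}$ along paths in the connected graph $\Sigma_1$, giving $\mathfrak{h}=\mathfrak{o}(M_d)$ directly. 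If you want a complete write-up, either reproduce that Lie-algebra argument for $M_d$ or find a reference stating the Zariski-density theorem in the generality of Vinberg representations with nondegenerate form; the bare citation as you have it is the gap.
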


\begin{proof}
The proof of the main theorem in \cite{benoist2004adherence} applies here, so we only sketch the argument provided there. Let $d \geq D$ and let $G_d$ be the Zariski-closure of $W_d$ in~$\mathrm{O}(M_d; \mathbb{R})$.  Denote by $\mathfrak{g}$ and $\mathfrak{h}$ the Lie algebras of $\mathrm{O}(M_d; \mathbb{R})$ and $G_d$, respectively. It is enough to show that $\mathfrak{g} = \mathfrak{h}$, since the Zariski-closure of $\mathrm{SO}(M_d; \mathbb{R})^\circ$ is~$\mathrm{SO}(M_d; \mathbb{R})$ and since~$W_d \not\subset \mathrm{SO}(M_d; \mathbb{R})$. 

For each distinct pair $i,j \in \{1, \ldots, n\}$, let $E_{i,j}$ be the orthogonal complement of $\langle e_i, e_j \rangle$ in $\mathbb{R}^n$ with respect to $M_d$. The subgroup of $\mathrm{O}(M_d; \mathbb{R})$ consisting of all elements that fix each vector in $E_{i,j}$ is a $1$-dimensional closed subgroup of~$\mathrm{O}(M_d; \mathbb{R})$ whose identity component $T_{i,j}$ corresponds to a subspace $\langle X_{i,j} \rangle$ of $\mathfrak{g}$ for some~${X_{i,j} \in \mathfrak{g}}$. Since $M_d$ is nondegenerate, the elements $X_{i,j}$ form a basis for~$\mathfrak{g}$ as a vector space \cite[Lemme~7]{benoist2004adherence}. Thus, to show $\mathfrak{g} = \mathfrak{h}$, it suffices to show that~$X_{i,j} \in \mathfrak{h}$ for each distinct pair $i,j \in \{1, \ldots, n\}$.

To that end, let $i,j \in \{1, \ldots, n\}$, $i \neq j$, and suppose first that $v_i$ and $v_j$ are adjacent in $\Sigma_1$. Then $\sigma_d(\gamma_i \gamma_j)$ generates an infinite cyclic subgroup of $T_{i,j}$, so that~$T_{i,j} \subset G_d$. It follows that $X_{i,j} \in \mathfrak{h}$ in this case. One now verifies that, since $\Sigma_1$ is connected, any Lie subalgebra of $\mathfrak{g}$ that contains $X_{i,j}$ for all $i,j$ such that $v_i, v_j$ are adjacent in fact contains $X_{i,j}$ for each distinct pair $i,j \in \{1, \ldots, n\}$ \cite[Preuve~du~Th\'eor\`eme,~second~cas]{benoist2004adherence}.
\end{proof}

Now let $K \subset \mathbb{R}$ be a real quadratic extension of $\mathbb{Q}$, let $\tau: K \rightarrow K$ be the nontrivial element of $\mathrm{Gal}(K/\mathbb{Q})$, and let $\mathcal{O}_K$ be the ring of integers of $K$. Then by Dirichlet's unit theorem, there is a unit $\alpha \in \mathcal{O}_K^*$ such that $\alpha \geq \max\{\frac{1}{\epsilon}, D\}$. Thus, we have 
\[
\frac{|\tau(\alpha)|}{\epsilon} \leq \alpha |\tau(\alpha)| = | \alpha \cdot \tau (\alpha)| = 1,
\]
where the final equality holds because $\alpha \in \mathcal{O}_K^*$. We conclude that $|\tau(\alpha)| \leq \epsilon$, and so~$M_{\tau(\alpha)}$ is positive-definite. It follows that $\Gamma := \mathrm{O}(M_\alpha ; \mathcal{O}_K)$ is a uniform arithmetic lattice in~$\mathrm{O}(M_\alpha ; \mathbb{R})$ (for an efficient survey of the relevant facts, see, for instance, Section 2 of \cite{gromov1987non}). Moreover, we have $W_\alpha \subset \mathrm{O}(M_\alpha ; \mathbb{Z}[\alpha]) \subset \Gamma$. 


\begin{remark}
Note that Galois conjugation by $\tau$ transports $\Gamma$ and hence $W_\alpha$ into the compact group $\mathrm{O}(M_{\tau(\alpha)}; \mathbb{R})$. That right-angled Coxeter groups on finitely many vertices embed in compact Lie groups had already been observed by Agol \cite{agol2018hyperbolic} using a similar trick to the one above. Indeed, Agol's argument was the inspiration for this note.
\end{remark}

\begin{proof}[Proof of Theorem \ref{main}] We show that $W_\alpha$ is a thin subgroup of $\Gamma \subset \mathrm{O}(M_\alpha ; \mathbb{R})$. By Lemma \ref{Zariski}, it suffices to show that $W_\alpha$ is of infinite index in $\Gamma$. Indeed, suppose otherwise. Then $W_\alpha$ is a uniform lattice in $\mathrm{O}(M_\alpha ; \mathbb{R})$. If $n = 3$, then immediately we obtain a contradiction, since in this case $W_\alpha$ is virtually a closed hyperbolic surface group, whereas $W$ is virtually free. Now suppose $n > 3$. There is some $\beta > \alpha$ and a path $[\alpha, \beta] \rightarrow \mathrm{GL}_n(\mathbb{R}), d \mapsto h_d$ such that $h_d^T M_d h_d = M_\alpha$ for all $d \in [\alpha,\beta]$ (this follows, for example, from the fact that $\mathrm{GL}_n(\mathbb{R})$ acts continuously and transitively on the set $\Omega \subset M_n(\mathbb{R})$ of symmetric matrices with the same signature as $M_\alpha$, and so the orbit map $\mathrm{GL}_n(\mathbb{R}) \rightarrow \Omega, g \mapsto g^TM_\alpha g$ is a fiber bundle). Setting $g_d = h_dh_\alpha^{-1}$ for $d \in [\alpha, \beta]$, we have that $g_\alpha = I_n$ and $g_d^T M_d g_d = M_\alpha$ for $d \in [\alpha,\beta]$. For $d \in [\alpha,\beta]$, let $\rho_d = g_d^{-1} \sigma_d g_d$, and note $\rho_d(W) \subset g_d^{-1} \mathrm{O}(M_d ; \mathbb{R}) g_d = \mathrm{O}(g_d^TM_dg_d ; \mathbb{R}) = \mathrm{O}(M_\alpha ; \mathbb{R})$. 

Let $\rho_d^+ = \rho_d\bigr|_{W^+}$ and $\sigma_d^+ = \sigma_d\bigr|_{W^+}$ for $d \in [\alpha, \beta]$. Then $\rho_\alpha^+(W^+)$ is a uniform lattice in the connected non-compact simple Lie group $\mathrm{SO}(M_\alpha ; \mathbb{R})^\circ$, and the latter is not locally isomorphic to $\mathrm{SO}(2,1)^\circ$ by our assumption that $n>3$. Thus, by Weil local rigidity \cite{weil1960discrete,weil1962discrete}, up to choosing $\beta$ closer to $\alpha$, we may assume that for each $d \in [\alpha, \beta]$ there is some $a_d \in \mathrm{SO}(M_\alpha ; \mathbb{R})^\circ$ such that 
\begin{equation}\label{conj}
\rho_d^+ = a_d \rho_\alpha^+ a_d^{-1} = a_d \sigma_\alpha^+ a_d^{-1}.
\end{equation}
But $\rho_d^+ = g_d^{-1} \sigma_d^+ g_d$, so we obtain from (\ref{conj}) that the trace $\mathrm{tr}(\sigma_d (\gamma_i \gamma_j))$ remains constant as $d$ varies within $[\alpha, \beta]$, where $i,j \in \{1, \ldots, n\}$ are chosen so that the vertices $v_i, v_j$ are adjacent in $\Sigma_1$. 

We claim, however, that $\mathrm{tr}(\sigma_d (\gamma_i \gamma_j)) = 4d^2 - 4 +n$ for $d \geq D$. Indeed, let~$d \geq D$. Then $M_d$ is nondegenerate, so that $\mathbb{R}^d$ splits as a direct sum of the $2$-dimensional subspace $\langle e_i, e_j \rangle \subset \mathbb{R}^n$ and its orthogonal complement $E_{i,j}$ with respect to $M_d$. Each of $\gamma_i$ and $\gamma_j$ acts as the identity on $E_{i,j}$, so our claim is equivalent to the assertion that  $\mathrm{tr}\left(\sigma_d (\gamma_i \gamma_j)\bigr|_{\langle e_i, e_j \rangle}\right) = 4d^2-2$, and the latter follows from the fact that, with respect to the basis $(e_i, e_j)$ of ${\langle e_i, e_j \rangle}$, the matrices representing $\sigma_d(\gamma_i), \sigma_d(\gamma_j)$ are
$
\begin{pmatrix} -1 & 2d \\ 0 & 1 \end{pmatrix}, \begin{pmatrix} 1 & 0 \\ 2d & -1 \end{pmatrix},
$
respectively.
\end{proof}

\begin{example}
We consider the case that $n \geq 5$ and the complement graph of~$\Sigma_1$ is the cycle $v_1v_2\ldots v_n$. In this case, the group $W$ may be realized as the subgroup of $\mathrm{Isom}(\mathbb{H}^2)$ generated by the reflections in the sides of a regular right-angled hyperbolic $n$-gon, so that $W$ is virtually the fundamental group of a closed hyperbolic surface. We have
\begin{equation}\label{cycle}
M_d = (1+d)I_n + d(J_n + J_n^{n-1}) - d(I_n + J_n + \ldots + J_n^{n-1})
\end{equation}
where $J_n \in \mathrm{M}_n(\mathbb{C})$ is the matrix
\[
J_n = \begin{pmatrix} e_2 & e_3 & \ldots & e_n & e_1 \end{pmatrix}.
\]
There is some $C \in \mathrm{GL}_n(\mathbb{C})$ such that
\[
CJ_n C^{-1} = \mathrm{diag}(1, \zeta_n, \zeta_n^2, \ldots, \zeta_n^{n-1})
\]
where $\zeta_n = e^{2\pi i/n}$. Observe that
\begin{alignat*}{4}
C(I_n + J_n + \ldots + J_n^{n-1})C^{-1} &= \mathrm{diag}(n, 0, \ldots, 0) \\
C(J_n + J_n^{n-1})C^{-1} &= \mathrm{diag}\left(2, 2\cos \frac{2\pi }{n}, 2\cos \frac{2\pi \cdot 2}{n}, \ldots, 2 \cos \frac{2\pi(n-1)}{n}\right).
\end{alignat*}
It follows from (\ref{cycle}) that, counted with multiplicity, the eigenvalues of $M_d$ are ${1-d(n-3)}$ and $1 + d\left(1 + 2\cos\frac{2\pi k}{n} \right)$, where $k= 1, \ldots, n-1$. Note that for $d$ sufficiently large, we have that $1-d(n-3) < 0$, and that $1 + d\left(1 + 2\cos\frac{2\pi k}{n} \right) \geq 0$ if and only if $\cos\frac{2\pi k}{n} \geq -\frac{1}{2}$. We conclude that the signature of $M_d$ is $(2\lfloor \frac{n}{3} \rfloor, n-2\lfloor \frac{n}{3} \rfloor)$ for all~$d$ sufficiently large. In particular, if $n = 3m$, $m \geq 2$, then the signature of~$M_d$ is $(2m, m)$ for all $d$ sufficiently large. The above discussion yields thin surface subgroups of uniform arithmetic lattices in $\mathrm{SO}(2\lfloor \frac{n}{3} \rfloor, n-2\lfloor \frac{n}{3} \rfloor)$ for each $n \geq 5$.

\end{example}

\subsection*{Acknowledgements} I thank Yves Benoist and Pierre Pansu for helpful discussions. I am also deeply grateful to the latter for inviting me to spend the fall of 2021 at Universit\'e Paris-Saclay, where this note was written, and to my supervisor Piotr Przytycki for his support during my stay.
\bibliography{ThinRACGsbib}{}
\bibliographystyle{alpha}

\end{document}